\theoremstyle{plain}
\newtheorem{theorem}{Theorem}[section]
\newtheorem{proposition}[theorem]{Proposition}
\newtheorem{remark}[theorem]{Remark}
\newtheorem{lemma}[theorem]{Lemma}
\newtheorem{definition}[theorem]{Definition}
\newtheorem{corollary}[theorem]{Corollary}
\newtheorem{example}[theorem]{Example}
\newtheorem{notation}[theorem]{Notation}
\def\T{\mathbb T}
\def\Q{\mathbb Q}
\newcommand\sF{{\mathcal F}}
\newcommand\sH{{\mathcal H}}
\newcommand\sO{{\mathcal O}}
\newcommand\sS{{\mathcal S}}
  \def \tab#1{\kern #1 truein}
  \def\E{\hbox{${\cal E}$}}
  \def\F{\hbox{${\cal F}$}}
  \def\G{\hbox{${\cal G}$}}
  \def\H{\hbox{${\cal H}$}}
  \def\Q{\hbox{${\cal Q}$}}
\begin{document}
 \title{Vector Bundles on Products of Varieties with $n$-blocks Collections}
\author{Edoardo Ballico and Francesco Malaspina
\vspace{6pt}\\
{\small   Universit\`a di Trento}\\
{\small\it 38050 Povo (TN), Italy}\\
{\small\it e-mail: ballico@science.unitn.it}\\
\vspace{6pt}\\
         {\small  Politecnico di Torino}\\
{\small\it  Corso Duca degli Abruzzi 24, 10129 Torino, Italy}\\
{\small\it e-mail: malaspina@calvino.polito.it}} \maketitle
\footnote{Mathematics Subject Classification 2000: 14F05, 14J60. \\
keywords: $n$-blocks Collections; vector bundles; Castelnuovo-Mumford regularity.}
  \begin{abstract} Here we consider the product of varieties with $n$-blocks collections . We
  give some cohomological splitting conditions for rank $2$ bundles. A cohomological characterization for vector bundles is also provided. The tools are  Beilinson's type spectral sequences generalized by Costa and Mir\'o-Roig. Moreover we introduce a notion of Castelnuovo-Mumford regularity on a product of finitely many
projective spaces and smooth quadric hypersurfaces in order to prove two splitting criteria for vector bundle with arbitrary rank.
  \end{abstract}

   \section*{Introduction}
A well known result by Horrocks (see \cite{Ho}) characterizes the vector bundles without intermediate cohomology on a projective space as direct sum of line bundles.
This criterion fails on  more general varieties. In fact there exist non-split vector bundles   on $X$ without intermediate cohomology. This bundles are called ACM bundles.\\

On a quadric hypersurface $\Q_n$  there is a theorem that classifies all the ACM bundles (see \cite{Kn}) as  direct sums of line bundles and spinor bundles (up to a twist - for generalities about spinor bundles see \cite{Ot2}).\\
Ottaviani has generalized Horrocks criterion to  quadrics and Grassmanniann giving cohomological splitting conditions for vector bundles (see \cite{Ot1} and \cite{Ot3}).\\
The starting point of this note is \cite{CM1} where Laura Costa and Rosa Maria Mir\'o-Roig give a  new proof of Horrocks and Ottaviani's criteria by using different techniques.
Beilinson's Theorem was stated in $1978$ and since then it has become a major tool in classifying vector bundles over projective spaces. Beilinson's spectral sequence was generalized by Kapranov (see \cite{Ka1} and \cite{Ka2}) to hyperquadrics and Grassmannians and by Costa and Mir\'o-Roig (see \cite{CM1}) to any smooth projective variety of dimension $n$ with a $n$-block collection.\\
Our aim is to use these results and techniques in order to give some cohomological splitting conditions for rank $2$ bundles on products of varieties with $n$-blocks collections.\\

 We also give  some Cohomological characterization for vector bundles. Horrocks in \cite{Ho} gives a cohomological characterization of $p$-differentials over $\mathbb{P}^n$.\\
Ancona and Ottaviani in \cite{A} give this characterization on quadrics and Costa and Mir\'o-Roig on multiprojective spaces. Here we give some different characterization on quadrics and we extend the result by Costa and Mir\'o-Roig on any product of varieties with $n$-blocks collections.\\

In the last section  we specialize on a product $X$ of finitely many
projective spaces and smooth quadric hypersurfaces.
In \cite{bm1} and \cite{bm2} we introduced a notion of Castelnuovo-Mumford regularity on quadric hypersurfaces and multiprojective spaces. We will give a suitable definition of regularity  on such a product $X$ in order to prove splitting criteria for vector bundle with arbitrary rank. Let $E$ be a vector
 bundle
on $X$. We will give two criteria 
which says when $E$ is (up to a twist)
a direct
sum of $\sO$ or the tensor product of pull-backs 
of spinor bundles on the quadric factors
of $X$ (see Theorems \ref{tpq} and \ref{spi}).\\

We want to thank Claudio Fontanari for the helpful discussions and Laura Costa and Rosa Maria Mir\'o-Roig for having showed us theirs preprints, which were the starting point of our work.

\section{Preliminaries}
Throughout the paper $X$ will be a smooth projective variety defined over the complex numbers $\mathbb{C}$ and we denote by $\mathcal{D}=D^b(\sO_X-mod)$ the derived category of bounded complexes of coherent sheaves of $\sO_X$-modules.\\
For the notations we refer to \cite{CM1}.\\
Now we give the definition of $n$-block collection in order to introduce a Beilinson's type spectral sequence generalized:
\begin{definition}
An exceptional collection $(F_0, F_1, \dots , F_m)$ of objects of $\mathcal{D}$ (see \cite{CM1} Definition $2.1.$) is a block if $Ext^i_{\mathcal{D}}(F_j,F_k)=0$ for any $i$ and $j\not=k$.\\
An $n$-block collection of type $(\alpha_0, \alpha_1, \dots , \alpha_n)$ of objects of $\mathcal{D}$ is an exceptional collection $$(\E_0, \E_1, \dots, \E_m)=(E^0_{1},\dots, E^0_{\alpha_0}, E^1_{1},\dots, E^1_{\alpha_1}, \dots, E^n_{1},\dots, E^n_{\alpha_n})$$
such that all the subcollections $\E_i=(E^i_{1},\dots, E^i_{\alpha_i})$ are blocks.
\end{definition}
\begin{notation} If $\F$ is a bundle on $X$ and $$(\E_0, \E_1, \dots, \E_m)=(E^0_{1},\dots, E^0_{\alpha_0}, E^1_{1},\dots, E^1_{\alpha_1}, \dots, E^n_{1},\dots, E^n_{\alpha_n})$$ is an $n$-block collection, we denote by $\overline{\E_j}$ the direct sum of all the bundle in $\E_j$, $$\overline{\E_j}=\oplus_{i=1}^{\alpha_j}  E^j_i.$$
So we have
$$\F\otimes\overline{\E_j}=\oplus_{i=1}^{\alpha_j} (\F\otimes E^j_i).$$
\end{notation}
\begin{example}  $(\sO_{\mathbb{P}^n}(-n), \sO_{\mathbb{P}^n}(-n+1),\dots , \sO_{\mathbb{P}^n})$ is an $n$-block collection of type $(1, 1, \dots , 1)$ on $\mathbb{P}^n$ (see \cite{CM1} Example $2.3. (1)$).
\end{example}
\begin{example}\label{e2}  Let us consider a smooth quadric hypersurface $\Q_n$ in $\mathbb P^{n+1}$.\\
We use the unified notation $\Sigma_*$ meaning that for even $n$ both the spinor bundles $\Sigma_1$ and $\Sigma_2$ are considered, and for $n$ odd, the spinor bundle $\Sigma$. We follow the notation of \cite{CM1} so the spinor bundles are twisted by $1$ with respect to those of \cite{Ot2} ($\Sigma_*=\sS_*(1)$)\\
$ (\E_0, \sO(-n+1),\dots , \sO(-1), \sO)$,
where $\E_0=(\Sigma_*(-n))$,  is an $n$-block collection of type $(1, 1, \dots , 1)$ if $n$ is odd, and of type $(2, 1, \dots , 1)$ if $n$ is even (see \cite{CM1} Example $3.4. (2)$).\\
Moreover we can have several $n$-block collections:
$$\sigma_j = (\sO(j),\dots ,\sO(n-1), \E_{n-j}, \sO(n+1),\dots , \sO(n+j-1))$$
where $\E_{n-j}=(\Sigma_*(n-1))$ and $1\leq j\leq n$ (see \cite{CM3} Proposition $4.4$).
\end{example}
\begin{example}\label{e2b}  Let $X=G(k,n)$ be the Grassmanniann of $k$-dimensional subspaces of the $n$-dimensional vector space. Assume $k>1$. We have the canonical exact sequence $$ 0 \to \sS\rightarrow \sO^n \rightarrow \Q \to 0,$$
where $\sS$ denote the tautological rank $k$ bundle and $\Q$ the quotient bundle.\\
Denote by $A(k,n)$ the set of locally free sheaves $\Sigma^{\alpha}\sS$ where $\alpha$ runs over diagrams fitting inside a $k\times (n-k)$ rectangle. We have $\Sigma^{(p,0, \dots ,   0)}\sS=S^p\sS$, $\Sigma^{(1, 1, 0, \dots ,0)}\sS=\wedge^2\sS=\sS(1)$ and $\Sigma^{(p_1+t, p_2+t, \dots p_k+t)}\sS=\Sigma^{(p_1, p_n, \dots, p_k)}\sS(t)$.\\
$A(k,n)$ can be totally ordered in such a way that we obtain a $k(n-k)$-block collection $\sigma=(\E_0,\dots , \E_{k(n-k)})$ by packing in the same block $\E_r$ the bundles $\Sigma^{\alpha}\sS$ with $|\alpha|=k(n-k)-r$ (see \cite{CM1} Example $3.4. (1)$).\\
\end{example}
\begin{example}\label{e3}  Let $X=\mathbf P^{n_1}\times\dots \times\mathbf P^{n_r}$ be a multiprojective space and $d=n_1+\dots+n_r$.\\
For any $0\leq j\leq r$, denote by $\E_j$ the collection of all line bundles on $X$ $$\sO_X(a_1^j, a_2^j, \dots , a_r^j)$$
with $-n_i\leq a_i^j\leq    0$ and $\sum_{i=1}^r a_i^j= j-d$.  Using the K\"unneth formula we prove that $\E_j$ is a block and that $$(\E_0,\dots , \E_{d})$$ is a $d$-block collection of line bundles on $X$ (see \cite{CM1} Example $3.4. (3)$).
\end{example}
Beilinson's Theorem was generalized by Costa and Mir\'o-Roig to any smooth projective variety of dimension $n$ with an $n$-block collection of coherent sheaves which generates $\mathcal{D}$ (see \cite{CM1} Theorem $3.6.$).\\

\section{Products of $n$-blocks Collections}
Let $X, Y$ be two smooth projective varieties of dimension $n$ and $m$. Let $(\G_0,\dots , \G_{n})$, $\G_i=(G_0^i,\dots , G_{\alpha_i}^i)$ be a $n$-block collection for $X$ and $(\F_0,\dots , \F_{m})$, $\F_i=(F_0^i,\dots , F_{\beta_i}^i)$ a $m$-block collection for $Y$.
\begin{notation}
We denote by $\G_i\boxtimes\F_j$ the set of all the bundles $G_k\boxtimes F_m$ on $X\times Y$ such that $G_k\in \G_i$ and $F_m\in \F_j$.\\
For any $0\leq k\leq n+m$, we define $\E_k=\G_i\boxtimes\F_j$ where $i+j=k$.
\end{notation}
\begin{lemma}$(\E_0,\dots , \E_{n+m})$ is a $(m+n)$-block collection for $X\times Y$ and it generates
the derived category $\mathcal{D}$.

\begin{proof} By \cite{CM0} Proposition $3.4.$ we have that $(\E_0,\dots , \E_{n+m})$ is a strongly exceptional collection and by \cite{CM0} Proposition $4.16.$ it generates
the derived category $\mathcal{D}$.\\
We only need to prove that each $\E_i$ is a block.
Let  $G_j\boxtimes F_k$ and $G_l\boxtimes F_m$ two different bundles of $\E_i$ we have to show that $$Ext^0( G_j\boxtimes F_k, G_l\boxtimes F_m)=Ext^0( G_j\boxtimes G_l)\otimes Ext^0 (F_k\boxtimes F_m)=0.$$
Now, since $j+k=l+m=i$ and $(j,k)\not=(l,m)$, we have $j<l$ that means $Ext^0( G_j\boxtimes G_l)=0$ or $k<m$ that means $Ext^0( F_k\boxtimes F_m)=0$.\\
All the other vanishing comes from the fact that $(\E_0,\dots , \E_{n+m})$ is a strongly exceptional collection.
\end{proof}
\end{lemma}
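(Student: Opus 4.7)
The plan is to split the claim into two parts: (i) that $(\E_0,\dots,\E_{n+m})$ is an exceptional collection generating $\sD^b(\sO_{X\times Y}\text{-mod})$, and (ii) that each layer $\E_k$ is in fact a block. For part (i) I would simply invoke the Costa--Mir\'o-Roig machinery already developed in \cite{CM0}: Proposition~3.4 there shows that the exterior product of two strongly exceptional collections is strongly exceptional, and Proposition~4.16 shows that if both factor collections generate the respective derived categories, so does the product collection. Together these dispose of (i) without further work.

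For part (ii), the natural tool is the K\"unneth formula for Ext. Given two distinct sheaves $G\boxtimes F$ and $G'\boxtimes F'$ lying in the same layer $\E_k$, write $G\in \G_a$, $G'\in\G_{a'}$, $F\in\F_b$, $F'\in\F_{b'}$ with $a+b=a'+b'=k$. Then
$$
\Ext^p_{X\times Y}(G\boxtimes F,\,G'\boxtimes F')
\;=\;\bigoplus_{s+t=p}\Ext^s_X(G,G')\otimes\Ext^t_Y(F,F'),
$$
so it suffices to show that for every $s,t$ one of the two tensor factors vanishes.

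The case analysis is short. If $(a,b)=(a',b')$ then the two sheaves lie inside the single sub-block $\G_a\boxtimes\F_b$, and at least one of $G\neq G'$ (in $\G_a$) or $F\neq F'$ (in $\F_b$) must hold; since $\G_a$ and $\F_b$ are themselves blocks, the corresponding Ext on $X$ or on $Y$ vanishes in \emph{every} degree, and the whole K\"unneth sum collapses to zero. If $(a,b)\neq(a',b')$, then without loss of generality $a<a'$, and the equality $a+b=a'+b'$ forces $b>b'$; hence $F'\in\F_{b'}$ strictly precedes $F\in\F_b$ in the exceptional collection on $Y$, so $\Ext^t_Y(F,F')=0$ for all $t$, and again the sum vanishes.

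The only subtle point --- and the place to be careful --- is keeping the ordering conventions straight: one must make sure that the direction in which ``preceding'' gives vanishing Ext matches the direction forced by $a+b=a'+b'$. Once this is pinned down the argument is entirely formal, and I would not expect any serious obstacle beyond bookkeeping; the substantive content is all absorbed into the K\"unneth isomorphism and the quoted propositions of \cite{CM0}.
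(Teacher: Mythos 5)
Your proof is correct and follows essentially the same route as the paper: invoke \cite{CM0} Propositions 3.4 and 4.16 for strong exceptionality and generation, then verify the block condition for each $\E_k$ via the K\"unneth decomposition of $Ext$ groups on $X\times Y$, using that equal sums $a+b=a'+b'$ force an order relation in one of the two factors. If anything, your case analysis is slightly more complete than the printed one: the paper only treats distinct bundles lying in different sub-blocks $\G_a\boxtimes\F_b$, $\G_{a'}\boxtimes\F_{b'}$ and only argues the $Ext^0$ vanishing (deferring higher $Ext$'s to strong exceptionality), whereas you also handle the case $(a,b)=(a',b')$ --- which genuinely occurs when a factor block has more than one member, e.g.\ for even-dimensional quadrics or Grassmannians --- by the block property of $\G_a$ and $\F_b$, and your K\"unneth argument kills all $Ext$ degrees at once.
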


\begin{remark}\label{r1}Let $X=X_1\times \dots \times X_r$ where $\dim X_i=n_i$ and $d=n_1+\dots+n_r$. For every $X_i$ let us consider the $n_i$-blocks  collection $(\G_0^i, \dots \G_{n_i}^i)$. Let assume that for any $i=1, \dots r$, $\G_{n_i}^i=\{\sO\}$.\\
For any $0\leq k\leq r$, denote by $\E_k$ the collection of all  bundles on $X$ $$\G^1_{j_1}\boxtimes \dots \boxtimes \G^r_{j_r}$$
with  $\sum_{i=1}^r j_i= k$.\\
 $(\E_0,\dots , \E_d)$ is a $d$-block collection  on $X$ and it generates
the derived category $\mathcal{D}$.\\
\end{remark}
\begin{example}
  Let $X=\Q_{n_1}\times\dots \times\mathbf \Q_{n_r}$ be a multiquadric space and $d=n_1+\dots+n_r$. Let us assume $n_i\geq 3$ for every $i$.\\
  For every $\Q_{n_i}$ let us consider the $n_i$-block  collection $(\G_0^i, \dots \G_{n_i}^i)$, where $\G^i_0=(\Sigma_*(-n_i))$ and $\G_j^i=\sO(-n_i+j)$ for $1\leq j \leq n_i$.\\
  Now we define a $d$-block collection  on $X$.\\
For any $0\leq k\leq r$, denote by $\E_k$ the collection of all  bundles on $X$ $$\G^1_{j_1}\boxtimes \dots \boxtimes \G^r_{j_r}$$
with  $\sum_{i=1}^r j_i= k$.
\end{example}

On $X=X_1\times \dots \times X_r$ we can use the Beilinson's type spectral sequences generalized by Costa and Mir\'o-Roig in order to prove splitting criteria.\\
We want to apply \cite{CM1} Proposition $4.1.$ for a bundle $E$ on $X$. We need for any $i$, $0\leq i\leq d-1$,
$$H^i(X, E\otimes \E_{d-i-1})=0.$$ In particular $$H^0(X, E\otimes \E_{d-1})=0.$$
 Here $\E_d=(\sO_X(0, \dots , 0))$ but $\E_{d-1}$ is more complicated so we need the following definition:\\
\begin{definition}A bundle $E$ on $X=X_1\times \dots \times X_r$ is normalized if $H^0(X, E)\not=0$ and $H^0(X, E\otimes \E_{d-1})=0$.\\
In particular on multiquadrics $E$ is normalized if $H^0(X, E)\not=0$ and $H^0(X, E\otimes \sO_X(a_1, \dots a_r))=0$ when $a_1, \dots a_r$ are non-positive integers not all vanishing.
\end{definition}
Now we can prove the following result (cfr \cite{Ml}):\\
\begin{proposition}Let $X=X_1\times \dots \times X_r$ as in Remark \ref{r1}.  Let $E$ be  a normalized bundle on $X$.
If, for any $i$, $1\leq i\leq d-1$,
$$H^i(X, E\otimes \G^1_{j_1}\boxtimes \dots \boxtimes \G^r_{j_r})=0$$ when $\sum_{h=1}^r j_h=i+1$, then $E$ contains $\sO_X$ as a direct summand.
\begin{proof} Since $E$ is normalized $H^0(X, E)\not=0$ but $H^0(X, E\otimes \E_{d-1})=0.$ So we can apply \cite{CM1} Proposition $4.1.$ for $E$ and conclude that $E$ contains $\overline{\E_d}\cong\sO$ as a direct summand..
\end{proof}
\end{proposition}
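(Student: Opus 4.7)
The plan is to reduce the statement to a direct invocation of \cite{CM1} Proposition $4.1$ applied to the $d$-block collection $(\E_0,\dots,\E_d)$ on $X$ from Remark \ref{r1}, a collection which---by the same argument as in the Lemma above for two factors, iterated across the $r$ factors---is a genuine $d$-block collection that generates $\mathcal{D}$. The key structural observation is that the last block of this collection is simply $\sO_X$: the only multi-index $(j_1,\dots,j_r)$ with $0\le j_h\le n_h$ and $\sum_h j_h=d$ is $(n_1,\dots,n_r)$, and by hypothesis each $\G^i_{n_i}=\{\sO_{X_i}\}$, so $\overline{\E_d}=\sO_{X_1}\boxtimes\cdots\boxtimes\sO_{X_r}\cong\sO_X$. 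Producing $\sO_X$ as a summand of $E$ is therefore the same as producing $\overline{\E_d}$ as a summand.

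Next I would unpack the vanishing hypothesis into the form needed by \cite{CM1} Proposition $4.1$. From the definition of the blocks in Remark \ref{r1},
$$E\otimes\overline{\E_k}=\bigoplus_{\sum_h j_h=k} E\otimes\bigl(\G^1_{j_1}\boxtimes\cdots\boxtimes\G^r_{j_r}\bigr),$$
so the stated vanishing $H^i(X,E\otimes\G^1_{j_1}\boxtimes\cdots\boxtimes\G^r_{j_r})=0$ over every multi-index with $\sum_h j_h=i+1$ is exactly $H^i(X,E\otimes\overline{\E_{i+1}})=0$ for $1\le i\le d-1$. Combined with the two ingredients packaged into the normalization assumption, namely $H^0(X,E)\ne 0$ and $H^0(X,E\otimes\overline{\E_{d-1}})=0$, this supplies precisely the cohomological input that \cite{CM1} Proposition $4.1$ requires. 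Applying that proposition then yields $\overline{\E_d}\cong\sO_X$ as a direct summand of $E$, as desired.

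The only substantive bookkeeping is matching the column-indexing: the paragraph preceding the Definition of normalized writes the needed vanishings as $H^i(X,E\otimes\overline{\E_{d-i-1}})=0$, while the statement above is phrased using the block $\overline{\E_{i+1}}$. Once one fixes which ordering of the $d$-block collection is being passed to \cite{CM1} Proposition $4.1$---and verifies that our convention is compatible with it, possibly after reversing the collection---the vanishing hypotheses match exactly and the whole argument reduces to the citation. This indexing check is the only point that requires care; every other step is a direct unpacking of the definitions of the block collection in Remark \ref{r1} and of normalization.
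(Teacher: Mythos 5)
Your overall route is the paper's: observe that the collection $(\E_0,\dots,\E_d)$ of Remark \ref{r1} is a $d$-block collection generating $\mathcal{D}$ with $\overline{\E_d}\cong\sO_X$, rewrite the hypothesis as vanishing of $H^i(X,E\otimes\overline{\E_\bullet})$, add the two conditions packaged in ``normalized'' (which give $H^0(X,E)\neq 0$ and the degree-zero vanishing $H^0(X,E\otimes\overline{\E_{d-1}})=0$), and then invoke \cite{CM1} Proposition $4.1$. That is exactly the one-line proof in the paper.

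The gap is precisely the point you dismiss as bookkeeping. The mismatch between the block $\overline{\E_{i+1}}$ appearing in the statement and the block $\overline{\E_{d-i-1}}$ appearing in the displayed requirement before the definition of normalized is not a matter of which ordering is fed to \cite{CM1} Proposition $4.1$, and it cannot be repaired by ``reversing the collection'': the reverse of a block (exceptional) collection is in general not one, and no relabelling turns the condition $H^i(X,E\otimes\overline{\E_{i+1}})=0$ into $H^i(X,E\otimes\overline{\E_{d-i-1}})=0$. These are genuinely different hypotheses, and under the literal reading $\sum_h j_h=i+1$ the proposition is false: take $X=\mathbf P^3$ with the collection of Remark \ref{r1} and $E=T_{\mathbf P^3}(-1)$ (or, if you insist on an actual product, $X=\mathbf P^3\times\mathbf P^1$ and $E=T_{\mathbf P^3}(-1)\boxtimes\sO$). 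The Euler sequence gives $H^0(T_{\mathbf P^3}(-1))\neq 0$, $H^0(T_{\mathbf P^3}(-2))=0$, so $E$ is normalized, and $H^1(E\otimes\sO(-1))=H^1(T_{\mathbf P^3}(-2))=0$, $H^2(E\otimes\sO)=H^2(T_{\mathbf P^3}(-1))=0$, so all the stated vanishings with $\sum_h j_h=i+1$ hold; yet $T_{\mathbf P^3}(-1)$ is simple, hence has no trivial direct summand. So the hypothesis must be read with $\sum_h j_h=d-i-1$ (a typo in the statement), in which case it is exactly $H^i(X,E\otimes\overline{\E_{d-i-1}})=0$ for $1\le i\le d-1$, with the $i=0$ case supplied by normalization, and only then does the citation of \cite{CM1} Proposition $4.1$ apply and finish the proof as in the paper. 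As written, your argument leaves this decisive identification unverified and proposes a fix that does not work.
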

\begin{remark}The above Proposition cannot become a splitting criterion because it is not possible to iterate the above argument. In fact we cannot obtain normalized bundles by tensoring only with $\sO(t,\dots ,t)$.
We have a splitting criteria only for rank two bundles\end{remark}
\begin{corollary} Let $X=X_1\times \dots \times X_r$ as in Remark \ref{r1}. Let $E$ be  a rank two normalized bundle on $X$.
If, for any $i$, $1\leq i\leq d-1$,
$$H^i(X, E\otimes \G^1_{j_1}\boxtimes \dots \boxtimes \G^r_{j_r})=0$$ when $\sum_{h=1}^r j_h=i+1$, then $E$ splits as a direct sum of line bundles.
\end{corollary}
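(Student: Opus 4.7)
The plan is to reduce directly to the preceding Proposition and then exploit the rank hypothesis. By assumption $E$ is a normalized rank two bundle on $X = X_1 \times \cdots \times X_r$ and satisfies the vanishing conditions
$$H^i(X, E\otimes \G^1_{j_1}\boxtimes \dots \boxtimes \G^r_{j_r})=0 \quad \text{for } 1\leq i \leq d-1,\ \sum_{h=1}^r j_h = i+1.$$
These are precisely the hypotheses of the Proposition, so I would first invoke it directly to conclude that $\sO_X$ appears as a direct summand of $E$.

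Once this is established, the argument is essentially finished. Writing $E \cong \sO_X \oplus F$, the sheaf $F$ is a direct summand of a locally free sheaf, hence itself locally free. Since $E$ has rank two, $F$ is locally free of rank one, i.e.\ a line bundle $L$ on $X$. Therefore $E \cong \sO_X \oplus L$, which is visibly a direct sum of line bundles, proving the corollary.

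The main point, also emphasized in the preceding Remark, is that the rank two hypothesis is what allows us to conclude: the Proposition only produces one copy of $\sO_X$, and there is no obvious way to renormalize $F$ so as to iterate the argument (tensoring with $\sO(t,\dots,t)$ does not suffice to put an arbitrary bundle into normalized position with respect to the full block $\E_{d-1}$). So there is no real obstacle here, beyond noticing that rank two plus one extracted trivial summand automatically yields a splitting into line bundles.
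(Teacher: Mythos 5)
Your proof is correct and is essentially the paper's intended argument: the corollary is stated without proof precisely because it follows immediately from the preceding Proposition, and your step of writing $E\cong\sO_X\oplus L$ with $L$ a line bundle (the rank-one locally free complement forced by the rank two hypothesis) is exactly the implicit reasoning. Nothing further is needed.
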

\begin{remark}In the last section we will see that, when $X=\mathbf P^{n_1}\times\dots \times\mathbf P^{n_s}\times\Q_{m_1}\times\dots \times\mathbf \Q_{m_q}$ by using different techniques, it is possible to prove splitting criteria for bundles with arbitrary rank.\end{remark}

\section{Cohomological Characterization of Vector Bundles }
Horrocks in \cite{Ho} gives a cohomological characterization of $p$-differentials over $\mathbb{P}^n$.\\
Ancona and Ottaviani in \cite{A} extend this characterization on quadrics:\\
they show that, if $\sF$ is a coherent sheaf on $\Q_n$ and for some $t\in\mathbb{Z}$ and some $0<j<n$\\
$H^i(\sF(t-i-1))=0$ for $j\leq i\leq n-2$\\
 $H^i(\sF(t-i+1))=0$ for $1\leq i\leq j$\\
 $H^{n-1}(\sF\otimes \Sigma(t-n))=0$ for any spinor bundle $\Sigma$,\\
 then $\sF$ contains $\psi_j^{h^i(\sF(t-j))}$ as direct summand (see \cite{A} Theorem $6.3.$).\\
 Moreover, let $\sF$ be a coherent sheaf on $\Q_n$ and suppose that for some $t\in\mathbb{Z}$ \\
$H^i(\sF(t-i-1))=0$ for $0\leq i\leq n-2$, and\\
$H^{n-1}(\sF\otimes \Sigma(t-n))=H^{n-1}(\sF(t-n))=0$ ,
 then\\ for $n$ odd $\sF$ contains $\Sigma^\vee(-t+1)^{h^{n-1}(\sF\otimes\Sigma(t-n))}$ as direct summand\\
for $n$ even $\sF$ contains $\Sigma_1^{\vee}(-t+1)^{h^{n-1}(\sF\otimes\Sigma_2(t-n))}\oplus\Sigma_2^{\vee}(-t+1)^{h^{n-1}(\sF\otimes\Sigma_1(t-n))}$ as direct summand\\  (see \cite{A} Theorem $6.7.$).\\
\begin{remark}By using different $n$-block collections on $\Q_n$ we can obtain different cohomological characterizations.\\
Let us consider for instance  the $n$-block collection:
$$(\sO(-n+1), \E_1, \sO(-n+2),\dots , \sO)$$
where $\E_1=(\Sigma_*(-n+1))$ and its right dual $$(\sO, \psi^{\vee}_1, \dots , \psi^{\vee}_{n-2}, \sH_{n-1}, \sO(1))$$ where $\sH_{n-1}=(\Sigma_*)$, (see \cite{CM3} Proposition $4.4$).\\
Let $\sF$ be a coherent sheaf on $\Q_n$ and suppose that for some $t\in\mathbb{Z}$ \\
$H^i(\sF(t-i+1))=0$ for $1\leq i\leq n-1$, and\\
$H^{n-1}(\sF(t-n+1))=0$ ,
 then\\ for $n$ odd $\sF$ contains $\Sigma^{\vee}(-t)^{h^{n-1}(\sF\otimes\Sigma(t-n+1))}$ as direct summand\\
for $n$ even $\sF$ contains $\Sigma_1^{\vee}(-t)^{h^{n-1}(\sF\otimes\Sigma_2(t-n+1))}\oplus\Sigma_2^{\vee}(-t)^{h^{n-1}(\sF\otimes\Sigma_1(t-n+1))}$ as direct summand.\\
Moreover if $\sF$ is a coherent sheaf on $\Q_n$ and for some $t\in\mathbb{Z}$ and some $0<j<n-1$\\
$H^i(\sF(t-i-1))=0$ for $j\leq i\leq n-2$\\
 $H^i(\sF(t-i+1))=0$ for $1\leq i\leq j$\\
 $H^{n-2}(\sF\otimes \Sigma(t-n+1))=0$ for any spinor bundle $\Sigma$,\\
 $H^{n-1}(\sF(t-n+1))=0$\\
 then $\sF$ contains $\psi_j^{h^i(\sF(t-j))}$ as direct summand.
 \begin{proof}Our cohomological conditions  are corresponding to those of \cite{CM1} Proposition $4.8$ so we can conclude that $E$ contains  as a direct summand the term $E_1^{-jj}$ of the spectral sequence defined in \cite{CM1} Theorem $3.6.$\\
  $E_1^{-jj}$ is $\Sigma^{\vee}(-t)^{h^{n-1}(\sF\otimes\Sigma(t-n+1))}$ or $\Sigma_1^{\vee}(-t)^{h^{n-1}(\sF\otimes\Sigma_2(t-n+1))}\oplus\Sigma_2^{\vee}(-t)^{h^{n-1}(\sF\otimes\Sigma_1(t-n+1))}$ if $j=n-1$ and it is $\psi_j^{h^i(\sF(t-j))}$ if $0<j<n-1$.
\end{proof}
\end{remark}  Laura Costa and Rosa Maria Mir\'o-Roig give a cohomological characterization on multiprojective spaces (\cite{CM1} Proposition $4.10$ and Theorem $4.11$) and grassmannians (see \cite{CM1} Corollary $4.12.$).\\
Let us consider now in general  the products of varieties with an $n$-block collection.
We have the following result:\\
\begin{lemma}Let $X, Y$ be two smooth projective varieties of dimension $n$ and $m$. Let $(\G_0,\dots , \G_{n})$ and $(\F_0,\dots , \F_{m})$  be an $n$-block collection for $X$ and  an $m$-block collection for $Y$. Let $('\G_0,\dots , '\G_{n})$ and $('\F_0,\dots , '\F_{m})$ be their right dual collections (see \cite{CM1} Proposition $3.9.$).\\
For any $0\leq k\leq n+m$, we define $'\E_k='\G_i\otimes '\F_j$ where $i+j=k$.\\
Then $('\E_0,\dots , '\E_{n+m})$ is the right dual $(m+n)$-block collection of $(\E_0,\dots , \E_{n+m})$ on $X\times Y$.
\begin{proof}$('\E_0,\dots , '\E_{n+m})$ is an $(m+n)$-block collection by the above lemma.\\
In order to prove that it is the right dual collection of $(\E_0,\dots , \E_{n+m})$ we only need to show that it verifies the orthogonality conditions (see \cite{CM1} ($3.7$) and ($3.8$)).\\
For every couple of bundles $\G\boxtimes \F$ and $'\G\boxtimes '\F$ on $X\times Y$,
$$H^{\alpha}(X\times Y, \G\boxtimes \F\otimes '\G\boxtimes '\F)=\bigoplus_{\alpha_1+\alpha_2=\alpha}H^{\alpha_1}(X, \G\otimes '\G)\otimes H^{\alpha_2}(Y, \F\otimes  '\F).$$
So the orthogonality conditions of $('\G_0,\dots , '\G_{n})$ and $('\F_0,\dots , '\F_{m})$ on $X$ and $Y$ imply those of $('\E_0,\dots , '\E_{n+m})$ on $X\times Y$.
\end{proof}
\end{lemma}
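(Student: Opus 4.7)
The plan is to split the verification into two parts: (i) establishing the $(m+n)$-block structure of $('\E_0,\dots,'\E_{n+m})$, and (ii) checking the orthogonality relations \cite{CM1} (3.7)--(3.8) that characterize a right dual collection.

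For part (i), by \cite{CM1} Proposition 3.9 the right dual collections $('\G_0,\dots,'\G_n)$ and $('\F_0,\dots,'\F_m)$ are themselves $n$-block and $m$-block collections on $X$ and $Y$ respectively. The preceding lemma, applied to this pair in place of the original collections, then immediately produces the desired $(m+n)$-block structure on $X\times Y$.

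For part (ii), the essential tool is the K\"unneth formula. For a tensor product $(A\otimes C)\boxtimes (B\otimes D)$ on $X\times Y$, cohomology decomposes as
$$H^{\alpha}(X\times Y,(A\otimes C)\boxtimes (B\otimes D))=\bigoplus_{\alpha_1+\alpha_2=\alpha}H^{\alpha_1}(X,A\otimes C)\otimes H^{\alpha_2}(Y,B\otimes D).$$
Choosing $A\in\G_i$, $B\in\F_j$ and $C,D$ to be members of $'\G_s$ and $'\F_t$, the orthogonality of the right dual on each factor forces $H^{\alpha_1}(X,A\otimes C)=0$ unless $s=n-i$ and $\alpha_1=i$ (with matching intra-block indices), and analogously for the $Y$-factor. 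Substituting these vanishings into the decomposition and tracking the definitions $\E_k=\G_i\boxtimes\F_j$ with $i+j=k$ and $'\E_{k'}='\G_s\boxtimes\, '\F_t$ with $s+t=k'$, one reads off exactly the orthogonality relations identifying $('\E_0,\dots,'\E_{n+m})$ as the right dual of $(\E_0,\dots,\E_{n+m})$ on $X\times Y$.

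The principal obstacle is the combinatorial bookkeeping: one must verify that every splitting $k=i+j$, $k'=s+t$, $\alpha=\alpha_1+\alpha_2$ contributes nontrivially exactly when the factorwise orthogonality is simultaneously satisfied on both factors, and that the one-dimensional contributions on each side assemble to the expected one-dimensional contribution on $X\times Y$ at the diagonal index $k'=n+m-k$. Once this indexing is carried out cleanly, no further input beyond K\"unneth and the duality on each factor is required.
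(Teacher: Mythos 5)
Your proposal is correct and follows essentially the same route as the paper: the block structure of $('\E_0,\dots,'\E_{n+m})$ comes from the preceding lemma on products of block collections, and the orthogonality conditions of \cite{CM1} (3.7)--(3.8) are verified via the K\"unneth decomposition of $H^{\alpha}(X\times Y,\cdot)$ combined with the factorwise orthogonality on $X$ and $Y$. The only difference is that you spell out the index bookkeeping that the paper leaves implicit.
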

\begin{remark}\label{r2}Let $X=X_1\times \dots \times X_r$ where $\dim X_i=n_i$ and $d=n_1+\dots+n_r$. For every $X_i$ let us consider the $n_i$-blocks  collection $(\G_0^i, \dots \G_{n_i}^i)$ and the right dual $n_i$-blocks  collection $('\G_0^i, \dots '\G_{n_i}^i)$.\\
For any $0\leq k\leq r$, denote by  $'\E_k$ the collection of all  bundles on $X$ $$'\G^1_{j_1}\boxtimes \dots \boxtimes '\G^r_{j_r}$$
with  $\sum_{i=1}^r j_i= k$.\\
$('\E_0,\dots , '\E_d)$ is the right dual $d$-block collection of $(\E_0,\dots , \E_d)$.\\
The block $'\E_{d-j}$ is the orthogonal block of the block $\E_{j}$ because it contains all the orthogonal bundles of the bundles in $\E_{j}$.
\end{remark}
\begin{example}Let us consider   $X=\mathbf P^{n_1}\times\dots \times\mathbf P^{n_r}\times\Q_{m_1}\times\dots \times\mathbf \Q_{m_s}$ with $d=n_1+\dots +n_r+m_1+\dots m_s$. On every $\mathbf P^{n_1}$ we consider the $n_i$-block collection:
$$\G^{n_i}=(\sO(-n_i), \sO(-n_i+1),\dots , \sO)$$
and its right dual $$(\sO, \T(-1),\dots , \wedge^{n_i}\T(-n_i))$$ and on every  $\Q_{m_i}$ we consider the $m_i$-block collection:
$$\H^{m_i}=(\sO(-m_i+1), \sH^{m_i}_1, \sO(-m_i+2),\dots , \sO)$$
where $\H^{m_i}_1=(\Sigma_*(-m_i+1))$ and its right dual $$(\sO, \psi^{\vee}_1, \dots , \psi^{\vee}_{m_i-2}, '\sH^{m_i}_{m_i-1}, \sO(1))$$ where $'\sH^{m_i}_{m_i-1}=(\Sigma_*)$.\\
We can obtain a $d$-block collection $$(\E_0, \dots \E_d)$$ as in Remark \ref{r1}.\\
For any $\sO(t_1)\boxtimes\dots\boxtimes \sO(t_r)\boxtimes\sO(k_1)\boxtimes\dots\boxtimes\sO(k_{h})\boxtimes\Sigma_*(-m_{h+1}-1)\boxtimes\dots\boxtimes\Sigma_*(-m_{s}-1)\in\E_{d-k}$ and any $0\leq k\leq d$ ($0\leq h\leq 0)$.\\
$$R_{\E_d\dots\E_{d-k+1}}\sO(t_1)\boxtimes\dots\boxtimes \sO(t_r)\boxtimes\sO(k_1)\boxtimes\dots\boxtimes\sO(k_{h})\boxtimes\Sigma_*(-m_{h+1}-1)\boxtimes\dots\boxtimes\Sigma_*(-m_{s}-1)=$$  $$\wedge^{-t_1}\T(t_1)\boxtimes\dots\boxtimes \wedge^{-t_r}\T(t_r)\boxtimes\psi^{\vee}_{-k_1}(k_1)\boxtimes\dots\boxtimes\psi^{\vee}_{-k_h}(k_{h})\boxtimes\Sigma_*\boxtimes\dots\boxtimes\Sigma_*$$
\end{example}
\begin{proposition}Let $X=X_1\times \dots \times X_r$ where $\dim X_i=n_i$ and $d=n_1+\dots+n_r$. Let $(\E_0,\dots , \E_d)$ be the  $d$-block collection with right dual $('\E_0,\dots , '\E_d)$ as in Remark \ref{r2}.
We denote by $r_j$ the rank of $\overline{'\E_j}$.\\
Let $E$ be a bundle on $X$.\\
 Assume there exists $j$, $0<j<n$ such that for any $-n\leq p\leq -j-1$
$$H^{-p-1}(X, E\otimes \overline{\E_{p+n}})=0$$
and for any $j+1\leq p\leq 0$
$$H^{-p+1}(X, E\otimes \overline{\E_{p+n}})=0.$$
If rank $E=r_{d-j}$, then $$E\cong \overline{'\E_{d-j}}.$$
\begin{proof}Our cohomological conditions  are corresponding to those of \cite{CM1} Proposition $4.8$ so we can conclude that $E$ contains  as a direct summand the term $E_1^{-jj}$ of the spectral sequence defined in \cite{CM1} Theorem $3.6.$\\ Now, since rank $E=r_{d-j}$, then $$E\cong \overline{'\E_{d-j}}.$$
\end{proof}
\end{proposition}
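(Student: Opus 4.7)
The plan is to apply the generalized Beilinson-type spectral sequence of Costa--Mir\'o-Roig (\cite{CM1}, Theorem $3.6$) to the bundle $E$ with respect to the $d$-block collection $(\E_0,\dots,\E_d)$ of Remark \ref{r2} and its right dual $('\E_0,\dots,'\E_d)$. By the lemma preceding Remark \ref{r2}, $(\E_0,\dots,\E_d)$ generates $\mathcal{D}$, so the hypotheses of \cite{CM1} Theorem $3.6$ are satisfied, giving a spectral sequence whose $E_1$ page has entries built from $H^{q}(X, E\otimes \overline{\E_{p+d}})$ tensored with $\overline{'\E_{p+d}}$ and which converges to $E$ concentrated in degree zero.

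The next step is to translate the two ranges of cohomological vanishings in the statement into the vanishing conditions of \cite{CM1} Proposition $4.8$. The range $-d\le p\le -j-1$ together with the hypothesis $H^{-p-1}(X,E\otimes\overline{\E_{p+d}})=0$ kills all entries below the anti-diagonal of the $E_1$ page, while $j+1\le p\le 0$ with $H^{-p+1}(X,E\otimes\overline{\E_{p+d}})=0$ kills all entries above it. What survives on the anti-diagonal is the single term $E_1^{-j,j}$, which by the construction of the spectral sequence is isomorphic to $\overline{'\E_{d-j}}^{\oplus m}$ with $m = h^{j}(X, E\otimes\overline{\E_{d-j}})$. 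Proposition $4.8$ of \cite{CM1} then extracts $\overline{'\E_{d-j}}^{\oplus m}$ as a direct summand of $E$.

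To upgrade this containment to an isomorphism I would invoke the rank hypothesis. Writing $E\cong \overline{'\E_{d-j}}^{\oplus m}\oplus F$ for some vector bundle $F$ and comparing ranks via $\rk E = r_{d-j}$, one obtains $m\cdot r_{d-j}+\rk F = r_{d-j}$. Since $m=0$ would force the entire $E_1$ page to vanish and hence $E=0$, contradicting $\rk E=r_{d-j}>0$, one must have $m\ge 1$; combined with the rank equation this forces $m=1$ and $F=0$, whence $E\cong \overline{'\E_{d-j}}$.

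The main obstacle I anticipate is the careful bookkeeping verifying that the two cohomological ranges in the hypothesis match precisely those demanded by \cite{CM1} Proposition $4.8$, tracking the index shifts between the position $p+d$ in our product block collection and the indices used in the single-variety statement of \cite{CM1} (and reconciling the apparent typographical use of $n$ in place of $d$ in the statement). A secondary subtlety is to check that the product collection of Remark \ref{r2} behaves, for the purposes of Proposition $4.8$, exactly as an abstract $d$-block collection on the $d$-dimensional variety $X$; this is already delivered by the product-of-blocks lemma at the beginning of Section $2$ together with the K\"unneth computation in the dual collection lemma above.
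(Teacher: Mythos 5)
Your proposal follows essentially the same route as the paper: both match the stated vanishings with the hypotheses of Costa--Mir\'o-Roig's Proposition $4.8$ applied to the Beilinson-type spectral sequence of their Theorem $3.6$ for the product block collection, extract $E_1^{-j,j}$ (built from $\overline{'\E_{d-j}}$) as a direct summand of $E$, and then use the rank hypothesis $\rk E = r_{d-j}$ to force the summand to be all of $E$. Your added bookkeeping on the multiplicity $m$ only spells out the final rank comparison that the paper leaves implicit, so there is no substantive difference.
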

\begin{example} Let us consider $X=\mathbf P^3 \times \Q_3$.\\  On $\mathbf P^3$ we have the collection $(\sO(-3),\dots , \sO)$
and the right dual $(\sO, \T(-1),\dots , \wedge^{3}\T(-3))$ and on   $\Q_3$ we have the collection
$(\Sigma(-3)), \sO(-2),\dots , \sO)$ and the right dual $(\sO, \psi^{\vee}_1, \psi^{\vee}_2, \Sigma)$.\\
We can obtain a $6$-block collection $$(\E_0, \dots \E_6)$$ as in Remark \ref{r1}.\\
Let $E$ be a rank $12$ bundle such that $$H^1(X, E\otimes \overline{\E_{6}})=H^1(X, E)=0$$
and for any $1\leq i\leq 5$
$$H^{i}(X, E\otimes \overline{\E_{5-i}})=0.$$
Then $$E\cong (\T(1)\boxtimes \sO)\oplus (\sO\boxtimes\psi_1)\cong \T(1)\boxtimes \psi_1.$$
\end{example}

\section{Regularity on $\mathbf P^{n_1}\times\dots \times\mathbf P^{n_s}\times\Q_{m_1}\times\dots \times\mathbf \Q_{m_q}$}
In \cite{bm1} we introduced the following definition of regularity on $\Q_n$ (cfr \cite{bm1} Definition $2.1$ and Proposition $2.4$):
\begin{definition} A  coherent sheaf $F$ on $\Q_n$ ($n\geq 2$) is said to be $m$-Qregular if $H^i(F(m-i))=0$ for $i=1,\dots, n-1$, $H^{n-1}(F(m)\otimes \Sigma_*(-n+1))=0$ and
    $H^n(F(m-n+1))=0$.\\
  We will say Qregular in order to $0$-Qregular.
  \end{definition}
  In \cite{bm2} we introduced the following definition of regularity on $\mathbf P^{n_1}\times\dots \times\mathbf P^{n_s}$ (cfr \cite{bm2} Definition $4.1$):
\begin{definition}\label{d4}
A  coherent sheaf $F$ on $\mathbf P^{n_1}\times\dots \times\mathbf P^{n_s}$  is said to be {\it $(p_1, \dots,
p_s)$-regular} if, for all $i>0$,
$$H^i(F(p_1, \dots, p_s)\otimes \sO(k_1, \dots, k_s))=0$$ whenever $k_1+ \dots, +k_s=-i$ and $-n_j\leq k_j\leq 0$ for any
$j=1,\dots , s$.\\
\end{definition}
Now we want to introduce a notion of regularity on $\mathbf P^{n_1}\times\dots \times\mathbf P^{n_s}\times\Q_{m_1}\times\dots \times\mathbf \Q_{m_q}$.\\
 Let us consider $X=\mathbf P^{n}\times\Q_{m}$.\\
\begin{definition}\label{dn}
On $\mathbf P^{n}$ we consider the $n$-block collection:
$$(\E_0, \dots \E_n)=(\sO(-n), \sO(-n+1),\dots , \sO)$$
and  on   $\Q_{m}$ we consider the $m$-block collection:
$$(\G_0, \dots \G_m)=(\sO(-m+1), \G_1,\dots , \sO)$$
where $\G_1=(\Sigma_*(-m+1))$.\\
A  coherent sheaf $F$ on $X$ is said to be $(p,p')$-regular if, for all $i>0$, $$H^i(F(p,p')\otimes \E_{n-j}\boxtimes\G_{m-k})=0$$ whenever $j+k=i$, $-n\leq -j\leq 0$ and $-m\leq -k\leq 0$.\\
\end{definition}
\begin{remark}\label{r22} If $m=2$  Definition \ref{dn} coincides with Definition \ref{d4} on $\mathbf P^{n}\times\mathbf P^1\times\mathbf P^1$.\\
In fact the $2$-block collection on $Q_2$ is $$(\sO(-1), \{\Sigma_1(-1), \Sigma_2(-1)\}, \sO)=(\sO(-1,-1), \{\sO(-1,0), \sO(0,-1)\}, \sO).$$
\end{remark}
\begin{remark} If $m=0$ we can identify $X$ with $\mathbf P^{n}$ and the sheaf $F(k,k')$ with $F(k)$. Under this
 identification $F$ is $(p,p')$-regular in the sense of Definition \ref{dn}, if and only if $F$ is $p$-regular
 in the sense of Castelnuovo-Mumford.\\
In fact, let $i>0$, $H^i(F(p,p')\otimes \E_{n-j}\boxtimes\G_{m-k})=H^i(F(p-j))=0$ whenever $j+k=i$, $-n\leq -j\leq 0$ and $-m\leq -k\leq 0$
if and only if $H^i(F(p-j))=0$ whenever $-i\leq -j\leq 0$ if and only if $H^i(F(p'-i))=0$.\\
If $n=0$ we can identify $X$ with $\Q_{m}$ and the sheaf $F(k,k')$ with $F(k')$. Under this
 identification $F$ is $(p,p')$-regular in the sense of Definition \ref{dn}, if and only if $F$ is $p'$-Qregular
 on $\Q_m$.\\
In fact, let $i>0$, $H^i(F(p,p')\otimes \E_{n-j}\boxtimes\G_{m-k})=H^i(F(p')\otimes\G_{m-k})=0$ whenever $j+k=i$, $-n\leq -j\leq 0$ and $-m\leq -k\leq 0$
if and only if $H^i(F(p')\otimes\G_{m-i})=0$  if and only if $F$ is $p'$-Qregular.
\end{remark}
\begin{lemma} $(1)$ Let $H$ be a generic hyperplane of $\mathbf P^{n}$. If $F$ is a regular coherent sheaf on $X=\mathbf P^{n}\times\Q_{m}$,
then $F_{|L_1}$ is  regular on $L_1=H\times \Q_{m}$.\\
$(2)$ Let $H'$ be a generic hyperplane of $\Q_m$. If $F$ is a regular coherent sheaf on $X=\mathbf P^{n}\times\Q_{m}$,
then $F_{|L_2}$ is  regular on $L_2= \mathbf P^{n}\times H'$.\\
\end{lemma}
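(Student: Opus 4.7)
The plan is to follow the classical Castelnuovo--Mumford strategy: given the short exact sequence of restriction to a hyperplane, tensor each test sheaf with it and use the long cohomology sequence to sandwich the required $H^i$-vanishing on the hyperplane between two $H^\bullet$-vanishings on $X$ that are already granted by the regularity hypothesis. Let $\pi_1\colon X\to\mathbf{P}^n$ and $\pi_2\colon X\to\Q_m$ denote the two projections.

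For (1), fix a generic hyperplane $H\subset\mathbf{P}^n$ and consider
$$0\to\sO_X(-1,0)\to\sO_X\to\sO_{L_1}\to 0.$$
Regularity of $F|_{L_1}$ on $L_1\cong\mathbf{P}^{n-1}\times\Q_m$ demands, for every $i\ge 1$ and every pair $(j,k)$ with $j+k=i$, $0\le j\le n-1$, $0\le k\le m$, the vanishing of
$$H^i\bigl(L_1,\,F|_{L_1}\otimes\pi_1^*\sO_H(-j)\otimes\pi_2^*\G_{m-k}\bigr).$$
Tensor the restriction sequence with $F(-j,0)\otimes\pi_2^*\G_{m-k}$: in the associated long exact sequence, the above $L_1$-cohomology is sandwiched between $H^i(X,F\otimes\E_{n-j}\boxtimes\G_{m-k})$ and $H^{i+1}(X,F\otimes\E_{n-j-1}\boxtimes\G_{m-k})$. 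The former vanishes by regularity of $F$ applied to the admissible parameters $(j,k)$ of weight $i$; the latter by the same regularity applied to $(j+1,k)$, of weight $i+1$, still admissible because $j+1\le n$. This yields (1).

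For (2), replace the above by
$$0\to\sO_X(0,-1)\to\sO_X\to\sO_{L_2}\to 0$$
and use a generic hyperplane section $H'\subset\Q_m$, so that $H'\cong\Q_{m-1}$. Whenever the block $\G^{(m-1)}_{m-1-k}$ is a line bundle -- that is, for every $k\ne m-2$ -- the identity $\sO_{\Q_m}(-k)|_{\Q_{m-1}}=\sO_{\Q_{m-1}}(-k)$ makes the sandwich argument of (1) go through verbatim. The only delicate case is $k=m-2$, which tests against the spinor block $\G^{(m-1)}_1=\Sigma_*(-m+2)$ on $\Q_{m-1}$. Here I would invoke the classical restriction identity $\Sigma^{(m)}_*|_{\Q_{m-1}}\simeq\bigoplus\Sigma^{(m-1)}_*$ (see \cite{Ot2}) together with an Ottaviani/Kn\"orrer-type exact sequence on $\Q_m$ expressing $\Sigma^{(m)}_*(-m+2)$ in terms of trivial sheaves and the block $\Sigma^{(m)}_*(-m+1)=\G^{(m)}_1$, so that the missing vanishing is reduced to cases already controlled either by the line-bundle sandwich or by the hypothesis on $F$.

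The main obstacle is precisely this twist mismatch in the spinor case of (2): the block collection on $\Q_m$ contains $\Sigma^{(m)}_*(-m+1)$ rather than the $\Sigma^{(m)}_*(-m+2)$ that would match the spinor block on $\Q_{m-1}$ under naive restriction, so the direct sandwich fails and one has to interpose a structural sequence of spinor bundles in order to bridge the gap. Once that quadric-specific step is in place, both halves of the lemma are essentially the same piece of bookkeeping on block indices.
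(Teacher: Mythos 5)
Your proposal is correct and follows essentially the same route as the paper: part (1) is the identical Hoffman--Wang-style sandwich with the restriction sequence, and for part (2) the paper handles the delicate spinor block exactly as you sketch, combining the restriction identities ${\Sigma_1}_{|\Q_{m-1}}\cong{\Sigma_2}_{|\Q_{m-1}}\cong\Sigma$ (for $m$ even, and $\Sigma_{|\Q_{m-1}}\cong\Sigma_1\oplus\Sigma_2$ for $m$ odd) with the tautological spinor sequence $0\to\Sigma_*(-1)\to\sO^{2^{l}}\to\Sigma_*\to 0$, suitably twisted, to supply the flanking vanishing that is not directly among the diagonal regularity hypotheses. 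Your ``bridge'' expressing $\Sigma_*(-m+2)$ through $\sO(-m+2)$ and the block $\Sigma_*(-m+1)$ is the same idea the paper implements (only with a slightly different choice of twist), so no further comparison is needed.
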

\begin{proof} $(1)$ We follow the proof of \cite{hw} Lemma $2.6.$. We get this exact cohomology sequence:
$$H^i(F(-j,0)\otimes \sO\boxtimes\G_{m-k}) \rightarrow H^i(F_{|L_1}(-j,0)\otimes \sO\boxtimes\G_{m-k}) \rightarrow H^{i}(F(-j-1,0)\otimes \sO\boxtimes\G_{m-k})$$
If $j+k=i$, $-n\leq -j\leq 0$ and $-m\leq -k\leq 0$, we have also $-n-1\leq j-1\leq 0$, so the first and the
third groups vanish  by hypothesis. Then also the middle group vanishes and $F_{|L_1}$ is regular.\\
$(2)$ We have to deal also with the spinor bundles. Firs assume $m$ even, say $m=2l$. We have ${\Sigma_1}_{|\Q_{m-1}}\cong {\Sigma_2}_{|\Q_{m-1}}\cong \Sigma$. Let us consider  the exact sequences 
$$ 0\rightarrow \sO(-j)\boxtimes \Sigma_1(-m)\rightarrow \sO(-j)\boxtimes\sO(-m+1)^{2^l} \rightarrow \sO(-j)\boxtimes\Sigma_2(-m+1)\rightarrow 0$$ tensored by $F$.\\
Let $i\geq m-1$ and $j=m-1-j$.
Since $H^{i}(F\otimes\sO(-j)\boxtimes \Sigma_2(-m+1))=H^i(F\otimes \E_{n-j}\boxtimes\G_1)=0$  and $H^{i+1}(F(-j,-m+1))=H^{i+1}(F\otimes \E_{n-j}\boxtimes\G_0)=0$, we also have\\ $H^{i+1}(F\otimes \sO(-j)\boxtimes \Sigma_1(-m))=0$.\\ From the exact sequences
$$0\rightarrow \sO(-j)\boxtimes\Sigma_1(-m+1)\rightarrow \sO(-j)\boxtimes\Sigma_1(-m+2) \rightarrow \sO(-j)\boxtimes{\Sigma_1}_{|\Q_{m-1}}(-m+2)\rightarrow 0$$ tensored by $F$, we get $$ H^i(F(-j,0)\boxtimes\Sigma_1(-m+1)) \rightarrow H^i(F(-j,0)\boxtimes{\Sigma_1}_{|\Q_{m-1}}(-m+1))\rightarrow H^{i+1}(F(-j,0)\boxtimes\Sigma_1(-m))$$
If $i\geq m-1$ and $j=m-1-j$, the first and the
third groups vanish  by hypothesis. Then also the middle group vanishes.\\
Assume now $m$ odd, say $m=2l+1$. We have ${\Sigma}_{|\Q_{m-1}}\cong \Sigma_1\oplus\Sigma_2$. We can consider  the exact sequences 
$$ 0\rightarrow \sO(-j)\boxtimes \Sigma(-m)\rightarrow \sO(-j)\boxtimes\sO(-m+1)^{2^{l+1}} \rightarrow \sO(-j)\boxtimes\Sigma(-m+1)\rightarrow 0$$ tensored by $F$. Then we argue as above.\\
 All the others vanishing in Definition \ref{dn} can be proved as in $(1)$ and we can conclude that $F_{|L_2}$ is regular.\\
\end{proof}
\begin{proposition}\label{p1} Let $F$ be a regular coherent sheaf on $X=\mathbf P^{n}\times\Q_{m}$ then
  \begin{enumerate}
  \item $F(p,p')$ is regular for $p,p'\geq 0$.\\
  \item $H^0(F(k,k'))$ is spanned by $$H^0(F(k-1,k'))\otimes H^0(\sO(1,0))$$ if $k-1, k'\geq 0$; and it is
spanned by
  $$H^0(F(k,k'-1))\otimes H^0(\sO(0,1))$$ if $k, k'-1\geq 0$ and $m>2$.\\
  \end{enumerate}
  \end{proposition}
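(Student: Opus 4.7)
The plan is to prove both parts together by a nested Mumford-type induction: outer induction on $\dim X=n+m$, and inner induction on $p+p'$ (for part~(1)) or on $k$ (for part~(2)). The engine is the preceding restriction lemma, which transfers regularity from $X$ to its generic slices $L_1\cong\mathbf P^{n-1}\times\Q_m$ and $L_2\cong\mathbf P^n\times\Q_{m-1}$. The base cases are $n=0$, where $X=\Q_m$ and the statement reduces to the Qregularity theorem of \cite{bm1}, and $m=0$, where $X=\mathbf P^n$ and the statement is classical Castelnuovo--Mumford.

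For part~(1), assume $F(p,p')$ is regular (inner IH) and deduce that $F(p+1,p')$ is regular; the claim for $F(p,p'+1)$ is symmetric. For each pair $(j,k)$ with $j+k=i>0$, I will tensor the ideal sequence $0\to\sO_X(-1,0)\to\sO_X\to\sO_{L_1}\to 0$ with $F(p+1,p')\otimes\E_{n-j}\boxtimes\G_{m-k}$ and inspect the long exact cohomology sequence. The left term $H^i(F(p,p')\otimes\E_{n-j}\boxtimes\G_{m-k})$ vanishes by the inner IH. The right term $H^i(F_{|L_1}(p+1,p')\otimes(\E_{n-j})_{|H}\boxtimes\G_{m-k})$ vanishes by the outer IH applied to the regular sheaf $F_{|L_1}$ on $L_1$. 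Squeezing then forces the middle term $H^i(F(p+1,p')\otimes\E_{n-j}\boxtimes\G_{m-k})$ to vanish, completing the inductive step.

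For part~(2), use the restriction sequence $0\to F(k-1,k')\to F(k,k')\to F_{|L_1}(k,k')\to 0$. By part~(1), $F$ is $(k,k')$-regular, and the instance $(j_0,k_0)=(1,0)$ of this condition gives $H^1(F(k-1,k'))=0$; hence $H^0(F(k,k'))\twoheadrightarrow H^0(F_{|L_1}(k,k'))$. By the outer IH on $L_1$, $H^0(F_{|L_1}(k,k'))$ is spanned by $H^0(F_{|L_1}(k-1,k'))\otimes H^0(\sO_{L_1}(1,0))$. The two auxiliary surjections $H^0(F(k-1,k'))\twoheadrightarrow H^0(F_{|L_1}(k-1,k'))$ (via $H^1(F(k-2,k'))=0$ when $k\geq 2$, and via $F$ being $(0,k')$-regular when $k=1$) and $H^0(\sO_X(1,0))\twoheadrightarrow H^0(\sO_{L_1}(1,0))$ (restriction of linear forms) allow lifting a spanning set; the standard subtract-and-divide argument using a defining linear form of $L_1$ then yields surjectivity of $H^0(F(k-1,k'))\otimes H^0(\sO_X(1,0))\to H^0(F(k,k'))$. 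The symmetric second statement follows by the same method using $L_2$; the hypothesis $m>2$ ensures $H'\cong\Q_{m-1}$ has dimension at least $2$, so the outer IH applies to the same-shape product $\mathbf P^n\times\Q_{m-1}$.

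The main technical obstacle is the boundary index $j=n$ in part~(1), where $(\E_0)_{|H}=\sO_H(-n)$ lies outside the canonical block collection on $L_1$; the fix is to factor $\sO_H(-n)=\sO_H(-1)\otimes\sO_H(-(n-1))$ and iterate the outer IH simultaneously on $F_{|L_1}(p,p')$ and on $F_{|L_1}(p+1,p')$ to absorb the extra twist, closing the argument via a nested use of the restriction lemma. The analogous spinor-bundle case $k=m-1$ on the $\Q_m$ factor is treated by the auxiliary exact sequences for $\Sigma_\ast$ already constructed in the preceding lemma.
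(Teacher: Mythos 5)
Your overall strategy coincides with the paper's: both parts are proved by the Mumford/Hoffman--Wang method, i.e.\ the restriction lemma for generic slices $L_1=H\times\Q_m$ and $L_2=\mathbf P^n\times H'$, the long exact sequence of the ideal sheaf of the slice to squeeze the middle group in part (1) (with the spinor terms treated by the auxiliary exact sequences), and the commutative square with $\sigma,\tau$ surjective plus the argument of \cite{m}, p.\ 100, for part (2). The paper leaves the dimension induction and the base cases implicit; you make them explicit, which is fine and harmless.

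The one place where you depart from the paper is your ``main technical obstacle'' paragraph, and there the proposed patch does not work as stated. The group to be killed at the boundary index $j=n$ is $H^{n+k}\bigl(F_{|L_1}(p+1,p')\otimes\sO_H(-n)\boxtimes\G_{m-k}\bigr)$ on $L_1=\mathbf P^{n-1}\times\Q_m$: its cohomological degree exceeds by one the sum of any admissible block indices if you insist on keeping the extra twist in the $\mathbf P^{n-1}$-direction, so factoring $\sO_H(-n)=\sO_H(-1)\otimes\sO_H(-(n-1))$ and ``iterating the outer IH on two twists'' never exhibits it as a regularity vanishing of some $F_{|L_1}(a,b)$ with $a,b\geq 0$. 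The correct move is to absorb the extra unit of twist in the quadric direction: for $k\leq m-3$ one has $F_{|L_1}(p+1,p')\otimes\sO_H(-n)\boxtimes\G_{m-k}\cong F_{|L_1}(p,p'+1)\otimes\sO_H(-(n-1))\boxtimes\G_{m-k-1}$, which is an instance of regularity of $F_{|L_1}(p,p'+1)$ in degree $(n-1)+(k+1)=n+k$; the case $k=m$ vanishes for dimension reasons, and the cases $k=m-2,\,m-1$ hit the spinor block and require exactly the spinor exact sequences written in the paper --- so the device you relegate to a side remark is in fact what closes the boundary. A smaller point: your justification of the hypothesis $m>2$ in part (2) is not the relevant one. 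The restriction step is unproblematic when $m-1=2$ (by Remark \ref{r22} regularity on $\mathbf P^n\times\Q_2$ is the multiprojective notion of \cite{bm2}); the hypothesis is needed because on $\Q_2$ the sheaf $\sO(-1)$ is not one of the blocks ($\G_1$ consists of the spinor bundles), so the vanishing $H^1(F(k,k'-2))=0$ that makes $\sigma$ surjective in the quadric direction is not among the regularity conditions when $m=2$ --- this is what the paper means by ``the spinor bundles are not involved'' for $m>2$.
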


  \begin{proof} $(1)$ We will prove part $(1)$ by induction. Let $F$ be a regular coherent sheaf, we want show that also $F(1,0)$ is regular.  We follow the proof of \cite{hw} Proposition $2.7.$\\
   Consider the exact
   cohomology sequence:
$$H^i(F(-j,0)\otimes \sO\boxtimes\G_{m-k}) \rightarrow H^i(F(-j+1,0)\otimes \sO\boxtimes\G_{m-k}) \rightarrow H^{i}(F_{|L_1}(-j+1,0)\otimes \sO\boxtimes\G_{m-k})$$
If $j+k=i$, $-n\leq -j\leq 0$ and $-m\leq -k\leq 0$, so the first and the third groups vanish  by hypothesis.
Then also the middle group vanishes.\\
A symmetric argument  shows the vanishing for $F(0,1)$. We only have to check the vanishing involving the spinor bundles.\\
$$H^{i}(F(-j,0)\boxtimes\Sigma_*(-m+1))\rightarrow  H^i(F(-j,1)\boxtimes\Sigma_*(-m+1)) \rightarrow H^i(F(-j,1)\boxtimes{\Sigma_*}_{|\Q_{m-1}}(-m+1))$$
If $i\geq m-1$ and $j=m-1-j$, the first and the
third groups vanish  by hypothesis. Then also the middle group vanishes.\\
$(2)$ We will follow the  proof of \cite{hw} Proposition $2.8.$\\
We consider the following diagram:
$$  \begin{array}{ccc}
 H^0(F(k-1, k'))\otimes H^0(\sO(1,0))& \xrightarrow{\sigma}& H^0(F_{|L_1}(k-1,k'))\otimes
 H^0(\sO_{L_1}(1,0))\\
 \downarrow \scriptsize{\mu}& &\downarrow \scriptsize{\tau}\\
 H^0(F(k,k'))&\xrightarrow{\nu}& H^0(F_{|L_1}(k,, k'))

 \end{array}$$
  Note that $\sigma$ is surjective if $k-1, k'\geq 0$ because $H^1(F(k-2,k'))=0$
  by regularity.\\
 Moreover also $\tau$ is surjective by $(2)$ for $F_{|L_1}$.\\
  Since both $\sigma$ and $\tau$ are surjective, we can see as in \cite{m} page $100$ that $\mu$ is also
  surjective.\\
  In order to prove that $H^0(F(k,k'))$ is spanned by $H^0(F(k,k'-1))\otimes H^0(\sO(0,1))$ if $k, k'-1\geq 0$, we can use a symmetric argument since for $m>2$ the spinor bundles are not involved in the proof.
  \end{proof}

  \begin{remark}\label{gg} If  $F$ is a regular coherent sheaf on $X=\mathbf P^{n}\times\Q_{m}$ ($m>2$) then it is globally generated.\\
  In fact by the above proposition we have the following surjections:
  $$H^0(F)\otimes
H^0(\sO(1,0))\otimes H^0(\sO(0,1))\rightarrow H^0(F(1,0))\otimes H^0(\sO(0,1)) \rightarrow H^0(F(1,1)),$$ and so
the map
$$H^0(F)\otimes H^0(\sO(1,1))\rightarrow H^0(F(1,1))$$ is a
  surjection.\\
  Moreover we can consider a sufficiently large twist $l$ such that $F(l,l)$ is globally generated. The
  commutativity of the diagram
$$\begin{matrix}
H^0(F)\otimes H^0(\sO(l,l))\otimes\sO
&\to&H^0(F(l,l))\otimes\sO\\
\downarrow&&\downarrow\\
H^0(F)\otimes\sO(l,l)&\to&F(l,l)
\end{matrix}$$
yields the surjectivity of $H^0(F)\otimes\sO(l,l)\to F(l,l)$, which implies that $F$ is generated by its
sections.\\
If $m=2$, then $F$ is spanned by Remark \ref{r22} and \cite{bm2} Remark $2.6.$ 
  \end{remark}

      Now we generalize Definition \ref{dn}:
      
      \begin{definition}\label{dq} Let us consider $X=\mathbf P^{n_1}\times\dots \times\mathbf P^{n_s}\times\Q_{m_1}\times\dots \times\mathbf \Q_{m_q}$.\\
On $\mathbf P^{n_j}$ (where $j=1, \dots, s$) we consider the $n_j$-block collections:
$$(\E^j_0, \dots \E^j_n)=(\sO(-n_j), \sO(-n_j+1),\dots , \sO)$$
and  on   $\Q_{m^l}$ (where $l=1, \dots, q$) we consider the $m_q$-block collections:
$$(\G^l_0, \dots \G^l_m)=(\sO(-m_l+1), \G^l_1,\dots , \sO)$$
where $\G^l_1=(\Sigma_*(-m_l+1))$.\\
A  coherent sheaf $F$ on $X$ is said to be $(p_1,\dots ,p_{s+q})$-regular if, for all $i>0$, $$H^i(F(p_1,\dots ,p_{s+q})\otimes \E^1_{n_1-k_1}\boxtimes\dots\boxtimes\E^s_{n_s-k_s}\boxtimes\G^1_{m_1-h_{1}}\boxtimes\dots\boxtimes\G^q_{m_q-h_q}))=0$$ whenever $k_1+\dots+k_s+h_1+\dots +h_q=i$, $-n_j\leq -k_j\leq 0$ for any
$j=1,\dots , s$ and $-m_l\leq -h_l\leq 0$ for any
$l=1,\dots , q$.\\ 
\end{definition}
\begin{remark}All the above properties (in particular Remark \ref{gg}) can be proved (by using exactly the same arguments) also for this extension of Definition \ref{dn} (cfr \cite{bm2})\end{remark}

We use our notion of regularity in order to  proving some splitting criterion on $X=\mathbf P^{n_1}\times\dots \times\mathbf P^{n_s}\times\Q_{m_1}\times\dots \times\mathbf \Q_{m_q}$.\\
\begin{theorem}\label{tpq}Let $E$ be a rank $r$ vector bundle on $X=\mathbf P^{n_1}\times\dots \times\mathbf P^{n_s}\times\Q_{m_1}\times\dots \times\mathbf \Q_{m_q}$ ($m_1,\dots , m_q>2$). Set $d=n_1+\dots+n_s+m_1+\dots +m_q$.\\ Then the following conditions are equivalent:
  \begin{enumerate}
  \item for any $i=1, \dots, d-1$ and for any integer $t$,  $$H^i(E(t,\dots, t)\otimes \E^1_{n_1-k_1}\boxtimes\dots\boxtimes\E^s_{n_s-k_s}\boxtimes\G^1_{m_1-h_{1}}\boxtimes\dots\boxtimes\G^q_{m_q-h_q}))=0$$
   whenever $k_1+ \dots, +k_s+h_1+\dots +h_q=i$, $-n_j\leq k_j\leq 0$ for any $j=1,\dots , s$ and $-m_l\leq -h_l\leq 0$ for any
$l=1,\dots , q$.\\
  \item There are $r$ integer $t_1, \dots, t_r$ such that $E\cong \bigoplus_{i=1}^r \sO(t_i,\dots, t_i)$.
  \end{enumerate}
  \end{theorem}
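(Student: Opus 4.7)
I expect the direction (2) $\Rightarrow$ (1) to be routine. If $E = \bigoplus_i \sO(t_i, \dots, t_i)$, the K\"unneth formula reduces each required vanishing to a sum of tensor products of cohomologies on the individual factors; on $\mathbf P^{n_j}$ the line bundle $\sO(t+t_i-k_j)$ with $0 \leq k_j \leq n_j$ contributes cohomology only in degree $0$ or $n_j$, and on $\Q_{m_l}$ the block elements (including the spinor summands, which are ACM) contribute only in degree $0$ or $m_l$. Under the constraint $k_1 + \dots + k_s + h_1 + \dots + h_q = i$ with $1 \leq i \leq d-1$, the sign conditions on $t+t_i$ required by the different factors cannot be met simultaneously, forcing the K\"unneth product to vanish.

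For the essential direction (1) $\Rightarrow$ (2), the plan is to peel off, one at a time, a line-bundle summand of the form $\sO(-t_0, \dots, -t_0)$, and induct on the rank $r$. The key normalization is
\[
t_0 := \min\{\, t \in \mathbb{Z} \ :\ E(t,\dots, t) \text{ is regular in the sense of Definition \ref{dq}}\,\}.
\]
This minimum exists: by Proposition \ref{p1} (in its extension to $X$) the set of such $t$ is upward-closed, hypothesis (1) already supplies every vanishing in degrees $1, \dots, d-1$, and the only remaining condition (degree $i = d$, forced by the unique admissible tuple $k_j = n_j$, $h_l = m_l$) reads, via Serre duality with $K_X = \sO(-n_1-1, \dots, -n_s-1, -m_1, \dots, -m_q)$,
\[
H^0\bigl(E^\vee(-t-1, \dots, -t-1)\bigr) = 0,
\]
which holds for all sufficiently large $t$ and fails for all sufficiently negative $t$.

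By the minimality of $t_0$, the sheaf $E(t_0-1, \dots, t_0-1)$ is not regular; since (1) gives the vanishings in degrees $1, \dots, d-1$ at every twist, the failure must occur at $i = d$. The same Serre duality computation then yields $H^0(E^\vee(-t_0, \dots, -t_0)) \neq 0$, and hence a non-zero morphism $\phi: E \to \sO(-t_0, \dots, -t_0)$. On the other hand, regularity of $E(t_0, \dots, t_0)$ together with the extension of Remark \ref{gg} to $X$ (asserted in the remark following Definition \ref{dq}, and using the hypothesis $m_l > 2$) guarantees that $E(t_0, \dots, t_0)$ is globally generated. Composing the evaluation map $H^0(E(t_0, \dots, t_0)) \otimes \sO_X \to E(t_0, \dots, t_0)$ with the $(t_0, \dots, t_0)$-twist of $\phi$ produces a non-zero map $H^0(E(t_0, \dots, t_0)) \otimes \sO_X \to \sO_X$, hence a non-zero linear form on $H^0(E(t_0, \dots, t_0))$; any section $s$ outside its kernel has $\phi \circ s \in \mathrm{End}(\sO(-t_0, \dots, -t_0)) = \mathbb{C}$ a non-zero scalar, so $s$ splits $\phi$. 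We obtain
\[
E \cong \sO(-t_0, \dots, -t_0) \oplus E',
\]
with $E'$ a vector bundle of rank $r - 1$ that inherits hypothesis (1), because cohomological vanishings pass to direct summands. Iterating $r$ times produces the desired decomposition.

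The main obstacle is that hypothesis (1) is tested only on the diagonal family $\{(t, \dots, t)\}$, so one does not obtain full regularity of $E$ at any single twist for free. The role of Serre duality is precisely to convert the missing $i = d$ vanishing into a statement about $H^0$ of the dual twist, which is what produces the splitting morphism $\phi$ at the critical level $t_0$. A secondary technical point is to verify that the pairing of $\phi$ with a section of $E(t_0, \dots, t_0)$ yields $\phi \circ s \neq 0$; this uses global generation in an essential way and relies on the hypothesis $m_l > 2$ through the generalized Remark \ref{gg}.
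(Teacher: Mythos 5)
Your proposal is correct and follows essentially the same route as the paper: take the minimal diagonal twist $t_0$ for which $E(t_0,\dots,t_0)$ is regular in the sense of Definition \ref{dq}, observe that hypothesis (1) leaves only the top-degree condition, convert its failure at $t_0-1$ via Serre duality into $H^0(E^\vee(-t_0,\dots,-t_0))\neq 0$, and combine with global generation (Remark \ref{gg}, using $m_l>2$) to split off a line bundle, then iterate; the converse is the same K\"unneth computation. You merely spell out details the paper leaves implicit (existence of the minimal twist, the splitting of the evaluation map, and that the complementary summand inherits hypothesis (1)).
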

  \begin{proof}  $(1)\Rightarrow (2)$. Let assume that $t$ is an integer such that $E(t, \dots, t)$ is regular but
  $E(t-1,\dots, t-1)$ not.\\
By the definition of regularity and $(1)$ we can say that $E(t-1,\dots, t-1)$ is not regular if and only if
$$H^{d}(E(t-1, \dots, t-1)\otimes \sO(-n_1,\dots, -n_s,-m_1+1,\dots, -m_q+1))\not=0.$$ By Serre duality we have that $H^0(E^{\vee}(-t,\dots, -t))\not=0$.\\
 Now since $E(t,\dots, t)$ is globally generated by Remark \ref{gg} and $H^0(E^{\vee}(-t,\dots, -t))\not=0$ we can conclude
 that $\sO$ is a direct summand of $E(t,\dots, t)$.\\
      By iterating these arguments we get $(2)$.\\
      $(2)\Rightarrow (1)$. By K\"{u}nneth formula for any $i=1, \dots, m+n-1$ and for any integer $t$, $$H^i(\sO(t,\dots, t)\otimes \E^1_{n_1-k_1}\boxtimes\dots\boxtimes\E^s_{n_s-k_s}\boxtimes\G^1_{m_1-h_{1}}\boxtimes\dots\boxtimes\G^q_{m_q-h_q}))=0$$
   whenever $k_1+ \dots, +k_s+h_1+\dots +h_q=i$, $-n_j\leq k_j\leq 0$ for any $j=1,\dots , s$ and $-m_l\leq -h_l\leq 0$ for any
$l=1,\dots , q$.\\ Then $\sO$ satisfies all the conditions in $(1)$.\end{proof}

\begin{theorem}\label{spi}Let $E$ be a rank $r$ vector bundle on $X=\mathbf P^{n_1}\times\dots \times\mathbf P^{n_s}\times\Q_{m_1}\times\dots \times\mathbf \Q_{m_q}$ ($m_1,\dots , m_q>2$).
 Set $d=n_1+\dots+n_s+m_1+\dots +m_q$.\\ Then the following conditions are equivalent:
  \begin{enumerate}
  \item for any $i=1, \dots, d-1$ and for any integer $t$,  $$H^i(E(t,\dots, t)\otimes \E^1_{n_1-k_1}\boxtimes\dots\boxtimes\E^s_{n_s-k_s}\boxtimes\G^1_{m_1-h_{1}}\boxtimes\dots\boxtimes\G^q_{m_q-h_q}))=0$$
   whenever $k_1+ \dots, +k_s+h_1+\dots +h_q\leq i$, $-n_j\leq k_j\leq 0$ for any $j=1,\dots , s$ and $-m_l\leq -h_l\leq 0$ for any
$l=1,\dots , q$ except when $k_1=\dots =k_s=0$ and $h_l=m_l-1$ for any
$l=1,\dots , q$.\\
  \item $E$ is a direct sum of   bundles $\sO$ and $\sO(0,\dots , 0)\boxtimes\Sigma_*\boxtimes\dots\boxtimes\Sigma_*$ with some balanced twist $(t,\dots , t)$.
  \end{enumerate}
  \end{theorem}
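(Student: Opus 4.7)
The plan is to follow the template of Theorem \ref{tpq} and its proof. I will treat $(2)\Rightarrow(1)$ first, by K\"unneth: the cohomology of an $\sO(t,\ldots,t)$ summand of $E$ reduces to classical Bott vanishings on each factor, while the cohomology of a twisted spinor-tensor summand $\sO\boxtimes\cdots\boxtimes\sO\boxtimes\Sigma_*^{(1)}\boxtimes\cdots\boxtimes\Sigma_*^{(q)}(t,\ldots,t)$ reduces to the pairings $H^\bullet(\Q_{m_l},\Sigma_*^{(l)}(t)\otimes\G_j^l)$ on each quadric factor, which by the orthogonality of the block collection $\G^l$ (Example \ref{e2}) vanish for every $j$ except $j=1$, i.e.\ precisely the exception built into hypothesis (1).

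For $(1)\Rightarrow(2)$ I argue by induction on $\mathrm{rank}\,E$. Let $t_0$ be the smallest integer for which $E(t_0,\ldots,t_0)$ is regular in the sense of Definition \ref{dq}; this exists by Serre vanishing and hypothesis (1), and the multifactor extension of Remark \ref{gg} makes $E(t_0,\ldots,t_0)$ globally generated. Comparing Definition \ref{dq} with (1) at twist $t_0-1$, every vanishing demanded by regularity except two is forced by (1), so the failure of regularity at $t_0-1$ must be one of \textbf{(a)} $H^d(E(t_0-1,\ldots,t_0-1)\otimes\sO(-n_1,\ldots,-n_s,-m_1+1,\ldots,-m_q+1))\neq 0$, corresponding to $i=d$ which lies outside the range of (1), or \textbf{(b)} $H^M(E(t_0-1,\ldots,t_0-1)\otimes\sO\boxtimes\cdots\boxtimes\sO\boxtimes\Sigma_*^{(1)}(-m_1+1)\boxtimes\cdots\boxtimes\Sigma_*^{(q)}(-m_q+1))\neq 0$ with $M:=\sum_{l=1}^q(m_l-1)$, corresponding to the excepted spinor corner. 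Case (a) is verbatim Theorem \ref{tpq}: Serre duality converts it to $H^0(E^\vee(-t_0,\ldots,-t_0))\neq 0$, and the standard Horrocks pairing splits off $\sO(t_0,\ldots,t_0)$. In case (b) I apply Serre duality on $X$, use $\Sigma_*^\vee\cong\Sigma_*(-1)$ on each $\Q_{m_l}$ (with the $\Sigma_1\leftrightarrow\Sigma_2$ swap dictated by the parity of $m_l/2$ when $m_l$ is even), and exploit the simplicity of each spinor factor (so that $\mathrm{End}(\Sigma')=\C$ for the appropriate spinor tensor $\Sigma':=\sO\boxtimes\cdots\boxtimes\sO\boxtimes\Sigma_{*'}^{(1)}\boxtimes\cdots\boxtimes\Sigma_{*'}^{(q)}$); a Horrocks-type composition argument then splits off $\Sigma'(t_0,\ldots,t_0)$ as a direct summand of $E$.

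In both cases the quotient has strictly smaller rank and, by the profile computed in the $(2)\Rightarrow(1)$ step, still satisfies (1); induction closes the argument. The main obstacle is case (b): in contrast to Theorem \ref{tpq}, Serre duality on the spinor-corner cohomology does not land directly in $H^0$ of a twist of $E^\vee$ but in the higher group $H^{d-M}$, so one must dualize further, using the spinor self-duality, before recovering a nonzero element of $\mathrm{Hom}(E,\Sigma'(t_0,\ldots,t_0))$. The parity bookkeeping of the spinor types $\Sigma_{*'}^{(l)}$ and the production of a companion map $\Sigma'(t_0,\ldots,t_0)\to E$ needed to realize the Horrocks-type splitting constitute the technical heart of this case; both ultimately depend on the minimality of $t_0$ together with the orthogonality of the block collection $(\E_0,\ldots,\E_d)$.
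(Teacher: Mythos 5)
Your overall architecture coincides with the paper's: minimal regular twist, two possible failures of regularity at $t-1$ (the $\sO(-n_1,\dots,-n_s,-m_1+1,\dots,-m_q+1)$ corner and the spinor corner), case (a) handled verbatim as in Theorem \ref{tpq}, and in case (b) a Horrocks-type argument producing a nonzero map $f\colon E(t,\dots,t)\to\sO\boxtimes\Sigma_*\boxtimes\cdots\boxtimes\Sigma_*$ and a companion map $g$ in the opposite direction, whose composition is forced to be (a nonzero multiple of) the identity by simplicity of the spinor tensor product. But the heart of case (b) is asserted rather than proved, and the mechanism you sketch does not suffice. The hypothesis gives nonvanishing of $H^{M+n_1+\cdots+n_s}$ (with $M=\sum_l(m_l-1)$) of $E(t,\dots,t)$ tensored with the spinor-corner bundle; to get $\mathrm{Hom}(E(t,\dots,t),\sO\boxtimes\Sigma_*\boxtimes\cdots\boxtimes\Sigma_*)\neq0$ you must descend from this high-degree group all the way to an $H^0$. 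You propose to do this with ``Serre duality, then dualize further using spinor self-duality,'' but $\Sigma_*^\vee\cong\Sigma_*(-1)$ does not change cohomological degree, and Serre duality lands you in the middle-dimensional group $H^{d-M}$ as you yourself note, so neither step produces the needed $H^0$. The paper instead runs an explicit chase through chains of exact sequences: Koszul-type resolutions on each $\mathbf P^{n_j}$ factor (dropping the degree by $n_j$ at a time) and the standard sequences $0\to\Sigma_*(p-1)\to\sO(p)^{2^{[(m_l+1)/2]}}\to\Sigma_*(p)\to0$ on each quadric factor (dropping it by $m_l-1$), obtaining at each stage a surjection of cohomology groups; the same chase applied to $E^\vee(-t,\dots,-t)$ together with Serre duality yields the map $g$.

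This also explains a feature of the statement that your proposal never uses: hypothesis (1) here demands vanishing for all index sums $k_1+\cdots+k_s+h_1+\cdots+h_q\leq i$, not merely $=i$ as in Theorem \ref{tpq}. That strengthening is exactly what makes every connecting map in the exact-sequence chase surjective; without invoking it (or an equivalent spectral-sequence argument) there is no route from the nonvanishing spinor-corner cohomology to the two nonzero morphisms $f$ and $g$, and the splitting in case (b) does not follow. So the gap is concrete: the degree-lowering step, which is the technical core of the theorem, is missing, and the tools you name in its place cannot replace it.
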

  \begin{proof} 
  $(1)\Rightarrow (2)$.
  Let assume that $t$ is an integer such that $E(t,\dots, t)$ is regular but
  $E(t-1,\dots, t-1)$ not.\\
By the definition of regularity and $(1)$ we can say that $E(t-1,\dots, t-1)$ is not regular if and only if one of the
following conditions is satisfied:
     \begin{enumerate}
     \item[i] $H^{d}(E(t-1, \dots, t-1)\otimes \sO(-n_1,\dots, -n_s,-m_1+1,\dots, -m_q+1))\not=0.$
      \item[ii] $H^{n_1+\dots+n_s+m_1-1+\dots +m_q-1}(E(t-1, \dots, t-1)\otimes \sO(-n_1,\dots, -n_s)\boxtimes\Sigma_*(-m_1+1)\boxtimes\dots\boxtimes\Sigma_*(-m_q+1))\not=0.$
    \end{enumerate}
  Let us consider one by one the conditions:\\
     $(i)$ Let $H^{d}(E(t-1, \dots, t-1)\otimes \sO(-n_1,\dots, -n_s,-m_1+1,\dots, -m_q+1))\not=0$, we can conclude that $\sO(t,\dots ,t)$ is a direct
summand as in the above theorem.\\
$(ii)$ Let $H^{n_1+\dots+n_s+m_1-1+\dots +m_q-1}(E(t, \dots, t)\otimes \sO(-n_1-1,\dots, -n_s-1)\boxtimes\Sigma_*(-m_1)\boxtimes\dots\boxtimes\Sigma_*(-m_q))\not=0$.
 Let us consider the following exact sequences tensored by $E(t, \dots, t)$:

 $$0 \to \sO(-n_1-1,\dots, -n_s-1)\boxtimes\Sigma_*(-m_1)\boxtimes\dots\boxtimes\Sigma_*(-m_q) \rightarrow\dots$$ $$\hskip4cm \dots\rightarrow \sO(0,-n_2-1,\dots, -n_s-1)\boxtimes\Sigma_*(-m_1)\boxtimes\dots\boxtimes\Sigma_*(-m_q)
\to 0,$$
$$0 \to \sO(0,-n_2-1,\dots, -n_s-1)\boxtimes\Sigma_*(-m_1)\boxtimes\dots\boxtimes\Sigma_*(-m_q) \rightarrow \dots$$ $$\hskip4cm\dots\rightarrow \sO(0,0,-n_3-1,\dots, -n_s-1)\boxtimes\Sigma_*(-m_1)\boxtimes\dots\boxtimes\Sigma_*(-m_q)
\to 0,$$
$$\dots$$
$$0 \to \sO(0,\dots,0, -n_s-1)\boxtimes\Sigma_*(-m_1)\boxtimes\dots\boxtimes\Sigma_*(-m_q) \rightarrow \dots$$ $$\hskip4cm\dots\rightarrow \sO(0,\dots, 0)\boxtimes\Sigma_*(-m_1)\boxtimes\dots\boxtimes\Sigma_*(-m_q)
\to 0.$$
Since all the bundles in the above sequences are 
$$\E^1_{n_1-k_1}\boxtimes\dots\boxtimes\E^s_{n_s-k_s}\boxtimes\G^1_{m_1-h_{1}}\boxtimes\dots\boxtimes\G^q_{m_q-h_q}$$ with decreasing indexes, by using the vanishing conditions in $(1)$ we can see that there is a surjection from $$H^{m_1-1+\dots +m_q-1}(E(t, \dots, t)\otimes \sO(0,\dots, 0)\boxtimes\Sigma_*(-m_1)\boxtimes\dots\boxtimes\Sigma_*(-m_q))$$ to $$H^{n_1+\dots+n_s+m_1-1+\dots +m_q-1}(E(t, \dots, t)\otimes \sO(-n_1-1,\dots, -n_s-1)\boxtimes\Sigma_*(-m_1)\boxtimes\dots\boxtimes\Sigma_*(-m_q)).$$
Let us consider now the following exact sequences on $\Q_{m_1}\times\dots \times\mathbf \Q_{m_q}$ for any integer $p$:  $$ 0\rightarrow \Sigma_*(-m_1)\boxtimes\dots\boxtimes\Sigma_*(p-1)\rightarrow \Sigma_*(-m_1)\boxtimes\dots\boxtimes\sO(p)^{2^{([m_q+1/2])}} \rightarrow \Sigma_*(-m_1)\boxtimes\dots\boxtimes\Sigma_*(p)\rightarrow 0.$$ We get the long exact sequence
$$0 \to\Sigma_*(-m_1)\boxtimes\dots\boxtimes\Sigma_*(-m_q) \rightarrow\Sigma_*(-m_1)\boxtimes\dots\boxtimes\sO(-m_q+1)^{2^{([m_q+1/2])}}\rightarrow\dots$$  $$\dots\rightarrow \boxtimes\Sigma_*(-m_1)\boxtimes\dots\boxtimes\Sigma_*(-1)
\to 0.$$ In the same way we can get
$$0 \to\Sigma_*(-m_1)\boxtimes\dots\boxtimes\Sigma_*(-1) \rightarrow\Sigma_*(-m_1)\boxtimes\dots\boxtimes\sO(-m_{q-1}+1)^{2^{([m_{q-1}+1/2])}}\rightarrow\dots$$  $$\dots\rightarrow \boxtimes\Sigma_*(-m_1)\boxtimes\dots\boxtimes\Sigma_*(-1)\boxtimes\Sigma_*(-1)
\to 0,$$
$$\dots$$
$$0 \to\Sigma_*(-m_1)\boxtimes\Sigma_*(-1)\boxtimes\dots\boxtimes\Sigma_*(-1) \rightarrow\sO(-m_{1}+1)^{2^{([m_{1}+1/2])}}\boxtimes\Sigma_*(-1)\boxtimes\boxtimes\Sigma_*(-1)\rightarrow\dots$$  $$\dots\rightarrow \Sigma_*(-1)\boxtimes\dots\boxtimes\Sigma_*(-1)
\to 0.$$
Then on $\mathbf P^{n_1}\times\dots \times\mathbf P^{n_s}\times\Q_{m_1}\times\dots \times\mathbf \Q_{m_q}$ we can obtain the following exact sequence tensored by $E(t, \dots, t)$:
$$0 \to \sO(0,\dots, 0)\boxtimes\Sigma_*(-m_1)\boxtimes\dots\boxtimes\Sigma_*(-m_q)\rightarrow \dots$$ $$\hskip4cm\dots\rightarrow \sO(0,\dots, 0)\boxtimes\Sigma_*(-1)\boxtimes\dots\boxtimes\Sigma_*(-1)
\to 0.$$ By using the vanishing conditions in $(1)$ as above we can see that there is a surjection from $$H^0(E(t, \dots, t)\otimes\sO(0,\dots, 0)\boxtimes\Sigma_*(-1)\boxtimes\dots\boxtimes\Sigma_*(-1))$$ to $$H^{m_1-1+\dots +m_q-1}(E(t, \dots, t)\otimes \sO(0,\dots, 0)\boxtimes\Sigma_*(-m_1)\boxtimes\dots\boxtimes\Sigma_*(-m_q))$$ and we can conclude that $$H^0(E(t, \dots, t)\otimes\sO(0,\dots, 0)\boxtimes\Sigma_*(-1)\boxtimes\dots\boxtimes\Sigma_*(-1))\not=0.$$
This means that there exists a non zero map$$  f: E(t, \dots, t)\rightarrow \sO(0,\dots, 0)\boxtimes\Sigma_*\boxtimes\dots\boxtimes\Sigma_*.$$

On the other hand  $$H^{n_1+\dots+n_s+m_1-1+\dots +m_q-1}(E(t, \dots, t)\otimes \sO(-n_1-1,\dots, -n_s-1)\boxtimes\Sigma_*(-m_1)\boxtimes\dots\boxtimes\Sigma_*(-m_q))\cong $$ $$\cong H^{q}(E(t, \dots, t)\otimes \sO(0,\dots, 0)\boxtimes\Sigma_*(-1)\boxtimes\dots\boxtimes\Sigma_*(-1)).$$ 

Let us consider the following exact sequences tensored by $E^{\vee}(-t, \dots, -t)$:
$$0 \to \sO(0,\dots, 0)\boxtimes\Sigma_*(-1)\boxtimes\dots\boxtimes\Sigma_*(-1)\rightarrow \dots$$ $$\hskip4cm\dots\rightarrow \sO(0,\dots, 0)\boxtimes\Sigma_*\boxtimes\dots\boxtimes\Sigma_*
\to 0.$$

By using the Serre duality and the vanishing conditions in $(1)$ we can conclude that $$H^0(E^\vee(-t,\dots, -t)\otimes\sO(0,\dots, 0)\boxtimes\Sigma_*\boxtimes\dots\boxtimes\Sigma_*)\not=0.$$
This means that there exists a non zero map$$  g:  \sO(0,\dots, 0)\boxtimes\Sigma_*\boxtimes\dots\boxtimes\Sigma_*\rightarrow E(t, \dots, t).$$
Then, by arguing as in \cite{bm2} Theorem $1.2$, we see that the composition of the maps $f$ and $g$ is not zero so must be the identity and we have that $\sO(0,\dots, 0)\boxtimes\Sigma_*\boxtimes\dots\boxtimes\Sigma_*$ is a direct summand of $E(t,\dots, t)$.\\
By iterating these arguments we get $(2)$.\\

$(2)\Rightarrow (1)$. We argue as in Theorem \ref{tpq}. Since $H^i(\Q_n, \Sigma_*(e))\not=0$ if and only if $i=0$ and $e\geq 0$ or $i=n$ and $e\leq -n-1$, we have that  $\sO(0,\dots , 0)\boxtimes\Sigma_*\boxtimes\dots\boxtimes\Sigma_*$ $\sO$ satisfies all the conditions in $(1)$.
  \end{proof}

\bibliographystyle{amsplain}

\end{document}